\theoremstyle{plain}
\newtheorem{theorem}{Theorem}
\newtheorem{lemma}[theorem]{Lemma}
\newtheorem{proposition}[theorem]{Proposition}
\theoremstyle{definition}
\newtheorem{definition}[theorem]{Definition}
\newtheoremstyle{named}%
    {}{}{\itshape}{}{\bfseries}{.}{.5em}{\thmnote{#3}}
\theoremstyle{named}
\newcommand{\R}{\mathbb{R}}
\newcommand{\Z}{\mathbb{Z}}
\newcommand{\Q}{\mathbb{Q}}
\newcommand{\C}{\mathbb{C}}
\newcommand{\id}{\mathrm{Id}}
\newcommand{\fr}{\mathtt{f}}
\renewenvironment{proof}[1][Proof]{\noindent\textbf{#1.} }{\ \rule{0.5em}{0.5em}\par\addvspace{\baselineskip}}
\newcommand{\Heis}{\mathcal{H}}
\renewcommand{\geq}{\geqslant}
\renewcommand{\leq}{\leqslant}
\definecolor{cycleBM}{RGB}{0,150,0}
\definecolor{cycleD}{RGB}{255,0,0}
\begin{document}
\title{Weakly framed surface configurations, Heisenberg homology and  Mapping Class Group action}
\author{ Awais Shaukat\footnote{Abdus Salam School of Mathematical Sciences, Lahore, Pakistan},Christian Blanchet\footnote{Universit{\'e} Paris Cité and Sorbonne Universit{\'e}, CNRS, IMJ-PRG, F-75013 Paris, France}}

\maketitle
\begin{abstract}
We obtain representations of the {\em $\fr$-based} Mapping Class Group of  oriented punctured surfaces from an action of mapping classes on Heisenberg homologies of a circle bundle over surface configurations.
\\

\noindent
\textbf{2020 MSC}: 57K20, 55R80, 55N25, 20C12, 19C09 \\
\textbf{Key words}: Mapping class group,  configuration spaces,  Heisenberg homology.
\end{abstract}

\section*{Introduction}
It is shown in \cite{HeisenbergHomology} that the braid group of an  oriented surface with one boundary component has a natural quotient isomorphic to the Heisenberg group. From this one obtains homologies with coefficients in any representation of the Heisenberg group. The Mapping Class Group acts on the local coefficients and in general there is a twisted action of the  Mapping Class Groups on Heisenberg homologies. For specific representations, including the famous Shr\"odinger one, the Mapping Class Group action can be untwisted, producing a native representation of a central extension.

In the closed case a similar quotient exists in genus $1$, but produces a version of the Heisenberg group with finite center in higher genus \cite{Causin_p}, or more involved metabelian quotients \cite{An}. Here  we will recover  an homomorphism to the full Heisenberg group by replacing the surface braid group by a central extension realized using an $S^1$-bundle over the configuration space.
Elements of this bundle will be called {\em weakly framed} configurations, and its fundamental group named the  {\em weakly framed braid group}. We obtain a presentation for this newly defined group and a quotient homomorphism to an Heisenberg group with infinite cyclic center. We then define homologies of weakly framed configurations with coefficients in any representation of the Heisenberg group. Finally we construct a twisted  action of what we call the  {\em $\fr$-based} Mapping Class Group, a central extension of the Mapping Class Group of the punctured surface whose elements are represented by diffeomorphisms fixing the {\em weakly framed} set of punctures.

The Heisenberg group in genus $g$ is usually realized as a group of $(g+2)\times(g+2)$ matrices. In  Section \ref{regular} we introduce the {\em linearised regular representation} which achieves it as a group of $(2g+2)\times(2g+2)$ matrices.
Using this representation as local coefficients, we obtain a native representation (no twisting) of the {\em $\fr$-based}  Mapping Class Group.

 A famous result of Bigelow~\cite{Bigelow2001} and Krammer~\cite{Krammer2002} states that the classical braid
 groups which are Mapping Class Groups in genus zero are linear. Bigelow's proof uses an homological action on the $2$-points configuration space in the punctured disc.  Then Bigelow and Budney~\cite{Bigelow-Budney} deduced that the mapping class group of the closed orientable surface of genus $2$ is also linear. We speculate that our representations can be used for the linearity problem in higher genus.

\paragraph{Acknowledgements.} We are thankful for the support of the Abdus Salam School of Mathematical Sciences.
This paper is part of the PhD thesis of the first author. We are grateful to Martin Palmer for very useful comments on the preliminary version of this paper.

\section{Weakly framed configurations}
Let $\Sigma_g$, $g\geq 1$, be a closed oriented genus $g$ surface. For $n\geq 2$, the unordered configuration space of $n$ points in $\Sigma_g$ is
\[
\mathcal{C}_{n}(\Sigma_g )= \{ \{c_{1},\dots,c_{n}\} \subset \Sigma_g \mid c_i\neq c_j \text{ for $i\neq j$}\}.
\]
 The surface braid group  is then defined as $\mathbb{B}_{n}(\Sigma_g)=\pi_{1}(\mathcal{C}_{n}(\Sigma_g),*)$.
 Here $*=\{*_1,\dots,*_n\}$ is a base configuration.
A presentation for this group was first obtained by G. P. Scott \cite{Scott} and revisited by Gonz\'ales-Meneses \cite{Gonzalez}, Bellingeri \cite {Bellingeri}.
 The braid group of a bounded surface has a natural quotient isomorphic to the Heisenberg group of the surface. This is proved in \cite{HeisenbergHomology} for a surface with one boundary component. In the closed case with genus $g>1$ a similar quotient produces a version of the Heisenberg group with finite center; see \cite[Section 5, Example 1]{B_al2022} in case $n\geq 3$.  We will recover the full Heisenberg group by using an $S^1$-bundle over the configuration space. Let us equip $\Sigma_g$ with a riemannian metric (the choice is irrelevant). This determines a conformal structure on $\Sigma_g$, which is equivalent to a complex structure. Then the configuration space $\mathcal{C}_{n}(\Sigma_g )$ inherits a complex structure with hermitian metric and a symplectic structure.

Using the complex structure we may define various  bundles  over the configuration space $\mathcal{C}_{n}(\Sigma_g )$. We have the complex tangent bundle $T_\C(\mathcal{C}_{n}(\Sigma_g ))$, its determinant
 $\Delta(\mathcal{C}_{n}(\Sigma_g ))=\Lambda^n(T_\C(\mathcal{C}_{n}(\Sigma_g ))$, the square determinant
 $\Delta^2(\mathcal{C}_{n}(\Sigma_g ))=\Delta(\mathcal{C}_{n}(\Sigma_g ))^{\otimes 2}$.

\begin{definition}
a) The weakly framed configuration space $\mathcal{C}^\fr_{n}(\Sigma_g )$ of a closed riemannian surface $\Sigma_g$ is the unit bundle in the square determinant $\Delta^2(\mathcal{C}_{n}(\Sigma_g ))$.\\
b) The weakly framed surface braid group is the fundamental group  $\mathbb{B}^\fr_{n}(\Sigma_g)=\pi_{1}(\mathcal{C}^\fr_{n}(\Sigma_g), *^\fr)$. Here $ *^\fr$ is a lift of the base configuration $*$.
\end{definition}

Using the symplectic structure we also have the lagrangian grassmannian bundle $\mathcal{L}(\mathcal{C}_{n}(\Sigma_g ))$, whose fiber is the  grassmanian of lagrangian $n$-spaces in $\C^n=\R^{2n}$ which can be identified with
 $U(n)/O(n)$. We have a square determinant map
\mbox{$ \det^2: \mathcal{L}(\mathcal{C}_{n}(\Sigma_g ))\rightarrow \mathcal{C}^\fr_{n}(\Sigma_g )$},
which allows to consider  $\mathcal{C}^\fr_{n}(\Sigma_g )$ as a quotient of  the lagrangian grassmannian bundle $\mathcal{L}(\mathcal{C}_{n}(\Sigma_g ))$.

The framed braid group of surfaces $FB_n(\Sigma_g)$ is studed in \cite{BellingeriGervais}. It is defined as the fundamental group of the space $F_n(\Sigma_g)$ of $n$-points configurations with a unit tangent vector at each point.
 A framing generates a lagrangian subspace which gives a map
 $FB_n(\Sigma_g)\rightarrow \mathcal{L}(\mathcal{C}_{n}(\Sigma_g ))$. Composing with the projection we obtain a fibration
$FB_n(\Sigma_g)\rightarrow \mathcal{C}^\fr_{n}(\Sigma_g )$ whose fiber is the kernel of $\det^2:(S^1)^n\rightarrow S^1$.
 We can deduce an homomorphism $FB_n(\Sigma_g)\rightarrow \mathbb{B}^\fr_{n}(\Sigma_g)$ whose image is an index $2$ subgroup which identifies each framing generator with $F^2$, where $F$ is the weak framing generator (see below for a definition).
 A presentation of our weakly framed braid group $\mathbb{B}^\fr_{n}(\Sigma_g)$ can be then deduced from \cite[Theorem 13]{BellingeriGervais}.
  We will give below a short proof which will clarify our conventions and choice of generators.

We fix a decomposition of $\Sigma_g$ as a disc with $2g$ handles of index $1$, which gives $\Sigma_{g,1}$, completed by a final handle of index $2$.
The  based loops, $\alpha_1,\dots,\alpha_g,\beta_1,\dots,\beta_g$ are depicted in Figure \ref{modelSurf}.
\begin{figure}
\centering
\includegraphics[scale=0.7]{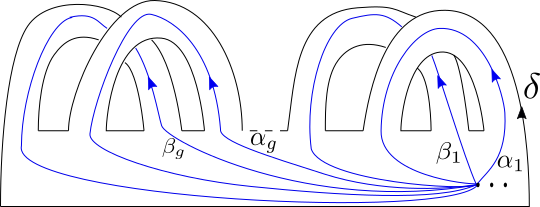}
\caption{Model for $\Sigma_g$; the index $2$ handle is attached along $\delta$ with reversed orientation.}
\label{modelSurf}
\end{figure}
The boundary of the $2$-handle gives a loop homotopic to $\delta^{-1}$ with  $\delta=\beta_g \overline\alpha_g\overline \beta_g\alpha_g \dots \beta_1 \overline\alpha_1\overline \beta_1\alpha_1$, which gives the relation in $\pi_1(\Sigma_g,*_1)$. Here we write the composition of loops from right to left.
   We fix a unit vector field $X$ on $\Sigma_{g,1}$. The loops $\alpha_i$, $\beta_i$, $1\leq i\leq g$, represent free  generators for $\pi_1(\Sigma_{g,1},*_1)$. Here the base point $*_1$ belongs to the base configuration $*$. We will use the same notation $\alpha_i$, $\beta_i$, $1\leq i\leq g$, for the corresponding braids where the weak framing is given by the square determinant of the framing obtained using $X$  at each point in the configuration. We have classical (positive) generators $\sigma_1$, \dots ,$\sigma_{n-1}$ where the weak framing is also given by $X$. We  have a weak framing generator $F$, which rotate counterclockwise the framing vector by $\pi$ around $*_1$. The choice of the vector field $X$ is irrelevant in the presentation, but will be needed when acting with mapping classes.
  For comparing with Bellingeri and al. relations, note that they involve the negative classical generators; see e.g. \cite[Fig. 1]{Bellingeri}.
\begin{theorem}\label{presentation}
For $n\geq 2$, the weakly framed braid group is generated by $\alpha_1,\dots,\alpha_g$, $\beta_1,\dots,\beta_g$, $\sigma_1$, \dots ,$\sigma_{n-1}$ , $F$,
with relations
\[
\begin{cases}
\, \text{$F$ is central},\\
\,\text{(\textbf{BR1}) }\, [\sigma_{i},\sigma_{j}] = 1 & \text{for } \lvert i-j \rvert \geq 2, \\
\,\text{(\textbf{BR2}) }\, \sigma_{i}\sigma_{j}\sigma_{i}=\sigma_{j}\sigma_{i}\sigma_{j} & \text{for } \lvert i-j \rvert = 1, \\
\,\text{(\textbf{CR1}) }\, [\alpha_{r},\sigma_{i}] = [\beta_{r},\sigma_{i}] = 1 & \text{for } i>1 \text{ and all } r, \\
\,\text{(\textbf{CR2}) }\, [\alpha_{r},\sigma_{1}\alpha_{r}\sigma_{1}] = [\beta_{r},\sigma_{1}\beta_{r}\sigma_{1}] = 1 & \text{for all } r, \\
\,\text{(\textbf{CR3}) }\, [\alpha_{r},\sigma^{-1}_{1}\alpha_{s}\sigma_{1}] = [\alpha_{r},\sigma^{-1}_{1}\beta_{s}\sigma_{1}] = & \\
\qquad\qquad\qquad = [\beta_{r},\sigma^{-1}_{1}\alpha_{s}\sigma_{1}] = [\beta_{r},\sigma^{-1}_{1}\beta_{s}\sigma_{1}] = 1 & \text{for all } r<s, \\
\,\text{(\textbf{SCR}) }\, \sigma_{1}\beta_{r}\sigma_{1}\alpha_{r}\sigma_{1}=\alpha_{r}\sigma_{1}\beta_{r} & \text{for all } r,\\
\,\text{(\textbf{FR}) }\, \beta_g \overline\alpha_g\overline \beta_g\alpha_g \dots \beta_1 \overline\alpha_1\overline \beta_1\alpha_1 \, \sigma_1\dots \sigma_{n-1}\sigma_{n-1}\dots \sigma_1= F^{4g-4}, \\

\end{cases}
\]
\end{theorem}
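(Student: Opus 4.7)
The plan is to deduce the presentation from the Bellingeri--Gervais presentation of the framed surface braid group $FB_n(\Sigma_g)$ \cite[Theorem 13]{BellingeriGervais}, transported along the homomorphism $\phi\colon FB_n(\Sigma_g)\to\mathbb{B}^\fr_n(\Sigma_g)$ constructed just above the statement. Three steps: identify the images of the Bellingeri--Gervais generators; check that they together with $F$ generate; translate the defining relations, paying most care to the modified surface relation.

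For generators, observe that the Bellingeri--Gervais framing generator $z_i\in FB_n(\Sigma_g)$ is a $2\pi$-rotation of the tangent vector $v_i$. Under the squared determinant it rotates $(v_1\wedge\cdots\wedge v_n)^{\otimes 2}$ by $4\pi$, i.e.\ traverses the $S^1$-fiber of $\mathcal{C}^\fr_n(\Sigma_g)\to\mathcal{C}_n(\Sigma_g)$ twice. Since $F$ by construction generates this fiber (a $\pi$-rotation of $v_1$ closes as a loop in $\mathcal{C}^\fr_n$ because $(-v_1)^{\otimes 2}=v_1^{\otimes 2}$), we obtain $\phi(z_i)=F^2$ for all $i$, independence on $i$ following from centrality of $F$. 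The remaining Bellingeri--Gervais generators identify with $\alpha_r,\beta_r$ and $\sigma_i^{-1}$ in our convention, as flagged in the paragraph before the statement.

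Surjectivity onto the full group then follows after adjoining $F$: from the long exact sequence of the fibration $F_n(\Sigma_g)\to\mathcal{C}^\fr_n(\Sigma_g)$, the fiber $\ker(\det^2\colon(S^1)^n\to S^1)$ has two connected components (distinguished by $\prod v_i=\pm 1$), so $\mathrm{im}(\phi)$ is of index $2$, and $F$ --- a half-rotation which is not a loop in $F_n(\Sigma_g)$ --- represents the nontrivial coset.

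For the relations, BR1, BR2, CR1--CR3 and SCR are verbatim translations (after the $\sigma_i\leftrightarrow\sigma_i^{-1}$ switch and the substitution $z_i=F^2$) of the corresponding Bellingeri--Gervais relations. Centrality of $F$ is immediate since $F$ lies in the $S^1$-fiber of the principal $S^1$-bundle $\mathcal{C}^\fr_n\to\mathcal{C}_n$. The relation FR is the translation of the Bellingeri--Gervais surface relation; its framing exponent is determined by the Chern class of $\Delta^2$ evaluated on a $2$-chain filling the loop $\delta\cdot\sigma_1\cdots\sigma_{n-1}^2\cdots\sigma_1$ in $\mathcal{C}_n$, and the identity $c_1(\Delta^2)=2c_1(\Delta)$ combined with the Euler number $2-2g$ of $\Sigma_g$ produces the exponent $4g-4$. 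The main technical obstacle is matching this exponent exactly: the sign flip from negative to positive $\sigma_i$, the doubling from squaring the determinant, and the Euler-class contribution must combine to give $F^{4g-4}$ rather than $F^{\pm(4-4g)}$ or an $n$-dependent expression. Once FR is verified, completeness of the presentation follows from the short exact sequence $1\to\langle F\rangle\to\mathbb{B}^\fr_n(\Sigma_g)\to\mathbb{B}_n(\Sigma_g)\to 1$ together with Bellingeri's presentation of $\mathbb{B}_n(\Sigma_g)$.
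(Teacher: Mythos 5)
Your overall strategy---pass to the weakly framed group through the Bellingeri--Gervais presentation of $FB_n(\Sigma_g)$, or equivalently present the central extension $1\to\langle F\rangle\to\mathbb{B}^\fr_n(\Sigma_g)\to\mathbb{B}_n(\Sigma_g)\to 1$ by correcting each relation of Bellingeri's presentation by a power of $F$---is viable, and is in fact the route the paper explicitly mentions as possible before opting for a direct argument. Your identification of the generators is sound: a $\pi$-rotation of one tangent vector does generate the $S^1$-fiber of $\Delta^2$, the framing generators $z_i$ do map to $F^2$, and the two components of $\ker(\det^2\colon (S^1)^n\to S^1)$ do exhibit the image of $FB_n(\Sigma_g)$ as an index-$2$ subgroup with $F$ in the nontrivial coset.

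The gap is that the one nontrivial computation in the theorem---the exponent $4g-4$ in (FR), together with the claim that BR, CR1--CR3 and SCR need \emph{no} correction---is asserted rather than proved. You yourself flag the exponent as ``the main technical obstacle'' whose signs ``must combine'' correctly, which is an acknowledgement that the step is open, not a verification; and ``$c_1(\Delta^2)=2c_1(\Delta)$ combined with the Euler number $2-2g$'' does not by itself pin down how $c_1(\Delta(\mathcal{C}_n(\Sigma_g)))$ evaluates on the particular $2$-chain bounding the lift of the relator, nor the sign. Note also that the correction exponents for the other relations are not automatic from ``verbatim translation'': they depend on which lifts of $\alpha_r,\beta_r,\sigma_i$ you fix. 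The paper resolves both points at once by choosing the lifts canonically via a nonvanishing vector field $X$ on the bounded subsurface $\Sigma_{g,1}$: the composite $\mathbb{B}_n(\Sigma_{g,1})\to\mathbb{B}^\fr_n(\Sigma_{g,1})\to\mathbb{B}^\fr_n(\Sigma_g)$ shows that BR, CR, SCR hold exactly for these lifts (since they already hold in $\mathbb{B}_n(\Sigma_{g,1})$, which has no surface relation), so only (TR) needs a correction; that correction is then the winding number of the $X$-framing along the lifted relator, namely $2g-2$ turns because $\delta$ runs negatively around the index-$2$ handle, doubled to $4g-4$ by the square determinant. To complete your argument you would need to supply an equivalent concrete winding-number or relative Euler-class computation, with the orientation conventions tracked, rather than appeal to the shape of the answer.
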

\begin{proof}
The braid group $\mathbb{B}_{n}(\Sigma_g)$ is generated by $\alpha_1,\dots,\alpha_g,\beta_1,\dots,\beta_g$, $\sigma_1$, \dots ,$\sigma_{n-1}$ with relations $BR$, $CR$, $SCR$ and
$$
\,\text{(\textbf{TR}) }\, \beta_g \overline\alpha_g\overline \beta_g\alpha_g \dots \beta_1 \overline\alpha_1\overline \beta_1\alpha_1 \, \sigma_1\dots \sigma_{n-1}\sigma_{n-1}\dots \sigma_1= 1.
$$
This is a slight reformulation of \cite[Theorem 1.2]{Bellingeri}. Weakly framed configurations form an oriented $S^1$-bundle over configurations hence the weakly framed braid group $\mathbb{B}^\mathtt{f}_{n}(\Sigma_g)$  is a central extension of $\mathbb{B}_{n}(\Sigma_g)$. It is generated by lifts of the previous generators and an extra central generator $F$. The relations are obtained from those for $\mathbb{B}_{n}(\Sigma_g)$, by correcting with the appropriate power of $F$. Let $\Sigma_{g,1}$ be the bounded surface obtained before gluing the index $2$ handle. The braid group $\mathbb{B}_{n}(\Sigma_{g,1})$ has presentation with generators $\alpha_1,\dots,\alpha_g,\beta_1,\dots,\beta_g$, $\sigma_1$, \dots ,$\sigma_{n-1}$ and relations $BR$, $CR$, $SCR$. Using a non singular vector field $X$ on $\Sigma_{g,1}$, we obtain homomorphisms
$$\mathbb{B}_{n}(\Sigma_{g,1})\rightarrow \mathbb{B}^{\mathtt{f}}_{n}(\Sigma_{g,1}) \rightarrow \mathbb{B}^{\mathtt{f}}_{n}(\Sigma_{g}) \ .$$
This implies that using this lifts for the generators $\alpha_1,\dots,\alpha_g,\beta_1,\dots,\beta_g$, $\sigma_1$, \dots ,$\sigma_{n-1}$, relations $BR$, $CR$, $SCR$ hold in $\mathbb{B}^{\mathtt{f}}_{n}(\Sigma_{g})$. It remains to check the framing correction for the last relation. The lift of the left hand side in (TR) is represented by the loop $\delta$ with the framing given by $X$. The loop $\delta$ is turning negatively around the outside $2$-cell hence the framing turns $2g-2$ times along $\delta$. The square determinant turns $4g-4$ times which gives relation (FR).
\end{proof}

\section{Heisenberg homologies}
In genus $1$, the braid group $\mathbb{B}_{n}(\Sigma_1)$ quotiented by $\sigma_1$ made central is isomorphic to the standard discrete Heisenberg group, and the construction from \cite{HeisenbergHomology} applies. In this section we will suppose $g>1$ and  consider a version of the discrete Heisenberg group designed for our situation.  The Heisenberg group  $\mathcal{H}_g$ is $\Z\nu \times H_1(\Sigma_g,\Z)$ with $\nu=\frac{\gcd(2g-2,g+n-1)}{2g-2}\in \Q$, and  operation
\begin{equation}
\label{eq:Heisenberg-product}
(k,x)(l,y)=(k+l+\,x.y,x+y).
\end{equation}
 It will be convenient to further embed this group in the rational or real Heisenberg groups $\mathcal{H}_\Q$, $\mathcal{H}_\R$ which motivate a formulation where the center is identified with $\mathbb{Z}\nu$ rather than $\Z$.
 For $g>1$,
 we will use the notation $a_i$, $b_i$ for the homology classes of $\alpha_i$, $\beta_i$, $1\leq i\leq g$.
\begin{proposition}
\label{hom_phi}
a) For each $g> 1$ and $n\geq 2$, there is a surjective  homomorphism
\[
\phi \colon \mathbb{B}^\fr_{n}(\Sigma_g) \relbar\joinrel\twoheadrightarrow \Heis_g
\]
sending each $\sigma_i$ to $u=(1,0)$, $\alpha_i$ to $\tilde{a}_i=(0,a_i)$, $\beta_i$ to $\tilde{b}_i=(0,b_i)$ and $F$ to \mbox{$v=(\frac{g+n-1}{2g-2},0)$}.\\
b) The kernel of $\phi$ is normally generated by the commutators $[\sigma_1,x]$, $x\in \mathbb{B}^\fr_{n}(\Sigma_g)$,
and $\sigma_1^{g+n-1}F^{2-2g}$.
\end{proposition}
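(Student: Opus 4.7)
My plan for part (a) is to verify each relation of Theorem~\ref{presentation} in $\Heis_g$ and then surjectivity. Centrality of $\phi(F) = v = ((g+n-1)/(2g-2), 0)$ is immediate since elements $(k, 0)$ commute with every $(l, y)$ by the product formula. BR1, BR2 and CR1, CR2 are then automatic from the centrality of $\phi(\sigma_i) = u$; CR3 reduces to $[\tilde{a}_r, \tilde{a}_s] = (2\, a_r \cdot a_s, 0) = 0$ for $r < s$ and the three analogous vanishings. The substantive computations are SCR, where both sides expand to $(2, a_r + b_r)$ using $a_r \cdot b_r = 1$, and FR, where each block $\beta_i \overline\alpha_i \overline\beta_i \alpha_i$ maps to $[\tilde{b}_i, \tilde{a}_i^{-1}] = (2,0) = u^2$, so the left-hand side totals $u^{2g}\, u^{2(n-1)} = u^{2(g+n-1)}$, matching $v^{4g-4}$ since both have first coordinate $2(g+n-1)$. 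Surjectivity follows since $\tilde{a}_i, \tilde{b}_i$ cover $H_1(\Sigma_g, \Z)$ modulo the center, while $u$ and $v$ generate $\Z\nu$ by Bezout applied to the coprime integers $(2g-2)/d$ and $(g+n-1)/d$, where $d = \gcd(2g-2, g+n-1)$.

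For part (b), the inclusion $N \subseteq \ker \phi$ is immediate: $[\sigma_1, x] \mapsto 1$ because $u$ is central, and $\sigma_1^{g+n-1} F^{2-2g} \mapsto 0$ by the same first-coordinate computation used in FR. For the reverse $\ker \phi \subseteq N$, my plan is to study $G := \mathbb{B}^\fr_n(\Sigma_g)/N$ and show the induced surjection $\bar\phi \colon G \twoheadrightarrow \Heis_g$ is injective by simplifying the presentation of $G$. In $G$, the centrality of $\sigma_1$ together with BR2 forces all $\sigma_i$ to be equal to a single central element $\sigma$: from $\sigma_1\sigma_2\sigma_1 = \sigma_2\sigma_1\sigma_2$ and centrality of $\sigma_1$ one gets $\sigma_1^2\sigma_2 = \sigma_2\sigma_1\sigma_2$, cancel $\sigma_2$ to obtain $\sigma_1^2 = \sigma_2\sigma_1$, hence $\sigma_2 = \sigma_1$, and iterate. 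Under this reduction CR1 and CR2 become tautologies, CR3 collapses to pairwise commutativity of the $\alpha, \beta$ generators with distinct indices, SCR yields the Heisenberg-type identity $[\alpha_r, \beta_r] = \sigma^2$, and FR becomes $\sigma^{2(g+n-1)} = F^{4g-4}$, which is the square of the new relation $\sigma^{g+n-1} = F^{2g-2}$ and hence redundant in $G$.

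The main obstacle is recognizing the resulting presentation of $G$ as $\Heis_g$, i.e.\ checking that $\bar\phi$ is injective. My approach is to construct an explicit normal form $\sigma^a F^b \prod_{k=1}^{g} \alpha_k^{p_k} \beta_k^{q_k}$ in $G$, modeled on the set-theoretic decomposition $\Heis_g = \Z\nu \times H_1(\Sigma_g, \Z)$. Existence of normal forms follows by rewriting, using the central position of $\sigma, F$ together with the commutator identity $[\alpha_i, \beta_i] = \sigma^2$ and commutativity of the remaining $\alpha, \beta$ pairs. The delicate point, where injectivity really lives, is checking that the subgroup $\langle \sigma, F \rangle \subseteq G$ maps isomorphically onto the center $\Z\nu$ of $\Heis_g$: equivalently, that the relation $\sigma^{g+n-1} = F^{2g-2}$ accounts for all identifications between powers of $\sigma$ and $F$ in $G$, and that the $\alpha, \beta$ relations do not create any further torsion in this central subgroup.
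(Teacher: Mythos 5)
Your part (a) and the reduction in part (b) follow the paper's own proof almost exactly: the paper verifies (SCR) and (FR) by the same computations (each block of $\delta$ contributing $u^2$, the $\sigma$'s contributing $u^{2(n-1)}$), proves surjectivity by observing that the image contains $\nu$ (your Bezout argument is precisely the detail behind this), and for (b) passes to the quotient $G$ and writes down exactly the simplified presentation you derive --- $\sigma_1$ and $F$ central, the $\alpha$'s and $\beta$'s commuting except for $\alpha_r\beta_r=\sigma_1^2\beta_r\alpha_r$, and $\sigma_1^{g+n-1}=F^{2g-2}$ --- with $\sigma_2,\dots,\sigma_{n-1}$ eliminated as you describe. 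The paper then concludes in one sentence that ``the homomorphism $\phi$ matches the two presentations.''

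The step you single out as delicate --- that $\langle\sigma,F\rangle\subseteq G$ maps isomorphically onto $\mathbb{Z}\nu$ --- is not merely unfinished in your write-up: it fails whenever $d:=\gcd(2g-2,\,g+n-1)>1$. In $G$ the central subgroup $\langle\sigma,F\rangle$ is the abelian group $\mathbb{Z}^2/\langle(g+n-1,\,2-2g)\rangle\cong\mathbb{Z}\oplus\mathbb{Z}/d$; one checks there is no further collapse by realizing $G$ as the central extension of $H_1(\Sigma_g,\mathbb{Z})$ by this group with cocycle $(x,y)\mapsto (x.y)\,[\sigma]$ and mapping $G$ onto it. By contrast $\mathbb{Z}\nu\cong\mathbb{Z}$ is torsion-free, and in $\mathcal{H}_g$ one has the stronger relation $u^{(g+n-1)/d}=v^{(2g-2)/d}$, which is not a consequence of $u^{g+n-1}=v^{2g-2}$. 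Concretely, $\sigma_1^{(g+n-1)/d}F^{(2-2g)/d}$ lies in $\ker\phi$ but survives as a nontrivial $d$-torsion element of $G$ (e.g.\ $g=2$, $n=3$: $\sigma_1^2F^{-1}$). So your normal-form strategy cannot close the argument as stated: either the claim must be restricted to $\gcd(2g-2,g+n-1)=1$, or the normal generator $\sigma_1^{g+n-1}F^{2-2g}$ must be replaced by $\sigma_1^{(g+n-1)/d}F^{(2-2g)/d}$. You have in fact put your finger on the exact point that the paper's one-line conclusion glosses over.
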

\begin{proof}
For the statement a) it is enough to show that all relations in Theorem \ref{presentation} are satisfied in $\mathcal{H}_g$ for the images of the generators. This is straightforward for relations  (BR) and (CR). For (SCR), we get
$$\phi(\sigma_1\beta_s\sigma_1\alpha_s\sigma_1)=(2,a_s+b_s)=\phi(\alpha_s\sigma_1\beta_s).$$
Denote by $lhs$ the left hand side in relation (FR). We have
$$\phi(lhs)=(2g+2n-2,0)=\phi(F^{4g-4}).$$
The subgroup $\mathrm{Im}(\phi)$ is generated by $u=(1,0)$, $v=(\frac{g+n-1}{2g-2},0)$, $\tilde{a}_i=(0,a_i)$, $\tilde{b}_i=(0,b_i)$,  $1\leq i\leq g$,
with $u$ and $v$ central and relations $$u^{g+n-1}=v^{2g-2}\ ,$$
 $$xy=u^{2x.y} yx \text{ for $x,y$ among the $\tilde a_i$,$\tilde b_i$},\ .$$
 This subgroup contains $\nu$ and hence is equal to $\Heis_g$.
From the presentation in Theorem \ref{presentation}, we obtain that the quotient of $\mathbb{B}^\fr_{n}(\Sigma_g)$ by
$[\sigma_1,\mathbb{B}^\fr_{n}(\Sigma_g)]$ and $\sigma_1^{g+n-1}F^{2-2g}$ is generated by $\sigma_1$, $\alpha_1,\dots,\alpha_g,\beta_1,\dots,\beta_g$,  $F$,
with relations
\[
\begin{cases}
\, \text{$F$ and $\sigma_1$ are central},\\
\,\text{(\textbf{CR3}) }\, [\alpha_{r},\alpha_{s}] = [\alpha_{r},\beta_{s}]
  = [\beta_{r},\beta_{s}] = 1 & \text{for all } r<s, \\
\,\text{(\textbf{SCR}) }\, \sigma_{1}^2\beta_{r}\alpha_{r}=\alpha_{r}\beta_{r} & \text{for all } r,\\
\,\text{(\textbf{FR}) }\, \sigma_1^{g+n-1}= F^{2g-2}, \\

\end{cases}
\]
The homomorphism $\phi$ matches the two presentations.
The statement b) for the kernel follows.
\end{proof}
Using the homomorphism $\phi$ we define a regular covering $\widetilde{\mathcal{C}}^\fr_n(\Sigma_g)$ of the weakly framed configuration space ${\mathcal{C}}^\fr_n(\Sigma_g)$.
The homology  of this cover is what we call
 the {\em Heisenberg homology}. Deck transformations endow Heisenberg homology with a right module structure over the group ring $\Z[\Heis_g]$. We may specialise to local coefficients as follows.
Let us denote by $S_*(\widetilde{\mathcal{C}}^\fr_n(\Sigma_g))$ the singular  chain complex of the Heisenberg cover, which is a right $\Z[\Heis_g]$-module. Given a representation $\rho: \Heis_g \rightarrow GL(V)$, the corresponding local homology is that of the complex $S_*(\mathcal{C}^\fr_n(\Sigma),V):=S_*(\widetilde{\mathcal{C}}^\fr_n(\Sigma))\otimes_{\Z[\Heis_g]} V$.
It will be called the Heisenberg homology of weakly framed surface configurations with coefficients in $V$.

It is convenient  to also consider  Borel-Moore homology
\[
H_*^{BM}(\mathcal{C}^\fr_{n}(\Sigma_g);V) =
{\varprojlim_T}\, H_*(\mathcal{C}^\fr_{n}(\Sigma_g), \mathcal{C}^\fr_{n}(\Sigma_g)\setminus T^\fr ; V),
\]
the inverse limit is taken over all compact subsets  $T\subset\mathcal{C}_{n}(\Sigma_g)$, and $T^\fr\subset \mathcal{C}^\fr_{n}(\Sigma_g)$ denotes the corresponding weakly framed configurations.

\section{Action of the $\fr$-based Mapping Class Group}
Recall that $*=\{*_1,\dots,*_n\}\in  \mathcal{C}_{n}(\Sigma_g )$, $g\geq 2$, $n\geq 2$, is the base $n$-points configuration. We denote by $\mathfrak{M}(\Sigma_g,*)$ the Mapping Class Group of the punctured surface.
 By a theorem of Moser \cite{Moser}, we may work with representatives of mapping classes which are area preserving, equivalently in this dimension with symplectomorphisms.
Let us denote by $\mathcal{C}_n(f)$ the  diffeomorphism  of  $\mathcal{C}_n(\Sigma_g)$ corresponding to a symplectomorphism $f$ which fixes the base configuration.
It is a symplectomorphism giving an action on the lagrangian bundle $\mathcal{L}(\mathcal{C}_{n}(\Sigma_g ))$. We obtain an induced action $\mathcal{C}^\fr_n(f)$ on the square determinant quotient $\mathcal{C}^\fr_n(\Sigma_g)$. We denote by $*^\fr$ the base configuration with a choice of weak framing, i.e. an inverse image of $*$ in $\mathcal{C}^\fr_n(\Sigma_g)$. We will consider here an extension of the Mapping Class Group obtained with isotopy classes of symplectomorphism fixing the base configuration with weak framing $*^\fr$.

We fix a lift $\tilde *^\fr\in \widetilde{\mathcal{C}}^\fr_n(\Sigma_g)$ of the weakly framed base configuration $*^\fr$.
\begin{proposition}
\label{f_lift}
Let $f$ be a symplectomorphism fixing $*^\fr$, then  $\mathcal{C}^\mathtt{f}_n(f)$ lifts  uniquely to a diffeomorphism
 $$\widetilde{\mathcal{C}}^\fr_n(f):\widetilde{\mathcal{C}}^\fr_n(\Sigma_g)\rightarrow \widetilde{\mathcal{C}}^\fr_n(\Sigma_g)\ ,$$
which fixes $\tilde *^\fr$.
 \end{proposition}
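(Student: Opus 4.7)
The plan is to apply the standard covering space lifting criterion. The covering $\widetilde{\mathcal{C}}^\fr_n(\Sigma_g) \to \mathcal{C}^\fr_n(\Sigma_g)$ is the regular cover associated to the normal subgroup $\ker\phi \lhd \mathbb{B}^\fr_n(\Sigma_g) = \pi_1(\mathcal{C}^\fr_n(\Sigma_g), *^\fr)$, and $f$ fixes $*^\fr$, so the induced automorphism $f_*$ of $\mathbb{B}^\fr_n(\Sigma_g)$ lifts $\mathcal{C}^\fr_n(f)$ to a homeomorphism of $\widetilde{\mathcal{C}}^\fr_n(\Sigma_g)$ sending $\tilde *^\fr$ to $\tilde *^\fr$ precisely when $f_*(\ker\phi) \subseteq \ker\phi$. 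Uniqueness is then automatic from connectedness of $\widetilde{\mathcal{C}}^\fr_n(\Sigma_g)$, since two lifts of a map to a covering space which agree at a single point must agree everywhere.

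The content is thus to verify $f_*(\ker\phi) \subseteq \ker\phi$. I would produce an automorphism $\bar f$ of $\Heis_g$ making the square $\phi\circ f_* = \bar f\circ\phi$ commute; the natural candidate is $\bar f(k,x) := (k, f_*x)$, which is a group automorphism of $\Heis_g$ by (\ref{eq:Heisenberg-product}) because $f$, as a symplectomorphism, preserves the intersection form on $H_1(\Sigma_g,\Z)$. Commutativity is checked on generators from Theorem \ref{presentation}. For the generator $F$, represented by a loop around the $S^1$-fibre of $\mathcal{C}^\fr_n(\Sigma_g) \to \mathcal{C}_n(\Sigma_g)$ over $*$, the map $\mathcal{C}^\fr_n(f)$ preserves this fibre and acts by an orientation-preserving self-map fixing $*^\fr$ (the symplectic action on the Lagrangian grassmannian descending to an orientation-preserving action on the quotient $\det^2$), forcing $f_*F = F$ and hence $\phi(f_*F) = v = \bar f(v)$. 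For $\alpha_i, \beta_i$, one uses the MCG-equivariance of the ``forgetful'' homomorphism $\mathbb{B}^\fr_n(\Sigma_g) \to H_1(\Sigma_g,\Z)$ sending $\alpha_i, \beta_i$ to $a_i, b_i$ and $\sigma_j, F$ to zero: this map is well-defined by a direct check on the relations of Theorem \ref{presentation} and is intrinsic, so it commutes with $f_*$.

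The main obstacle is the generator $\sigma_j$: one must show $\phi(f_*\sigma_j) = u$, not merely that it is central. Granting the previous naturality arguments, $\phi(f_*\sigma_j)$ automatically lies in $Z(\Heis_g) = \Z\nu$, so the content is to pin down the correct integer. This amounts to the MCG-invariance of the ``linking number with the diagonal divisor'' on $\mathcal{C}_n(\Sigma_g)$. Since $\sigma_j$ represents a small positive half-twist around the diagonal and $f$ is orientation-preserving, the geometric class is preserved; I would make this precise by viewing the class dual to $\sigma_j$ as Poincar\'e-dual to the diagonal divisor in a suitable compactification of $\mathcal{C}_n(\Sigma_g)$, whose $f$-invariance is intrinsic. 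Once $\phi(f_*\sigma_j) = u$ is established for all $j$, the commutative square exists on all generators, $f_*(\ker\phi) = \ker\phi$ follows, and the covering space lifting criterion produces the unique lift as claimed.
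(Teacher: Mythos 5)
Your overall strategy (covering-space lifting criterion, reduced to showing $f_*(\ker\phi)\subseteq\ker\phi$ by exhibiting an automorphism of $\Heis_g$ making the square commute) is the right one, but your specific candidate $\bar f(k,x)=(k,f_*x)$ is generally \emph{not} the automorphism induced by $f$, and the commutativity check as you describe it does not go through. The paper records, immediately after this proposition, that the induced automorphism has the form $f_\Heis(k,x)=(k+\delta_f(x),f_*(x))$ where $\delta_f\in H^1(\Sigma_g,\Z\nu)$ is a generally nonzero crossed homomorphism. Concretely, your verification for $\alpha_i,\beta_i$ via the forgetful map to $H_1(\Sigma_g,\Z)$ only controls the $H_1$-component of $\phi(f_*\alpha_i)$; it says nothing about the central component, which is exactly $\delta_f(a_i)$ and need not vanish. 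So the square $\phi\circ f_*=\bar f\circ\phi$ with your $\bar f$ fails in general. The gap is fixable — you should not prescribe the central part of $\bar f$ on the $\tilde a_i,\tilde b_i$ in advance, but define it to be whatever $\phi(f_*\alpha_i)$, $\phi(f_*\beta_i)$ produce, and then check that the resulting map is still an automorphism (it is, since $f_*$ is symplectic and the correction is additive) — but as written the central step of your argument is asserted of a map for which it is false.

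Separately, your treatment of $\sigma_j$ (Poincar\'e duality against the diagonal divisor in a compactification) is a plan rather than a proof, and it is heavier than needed. The paper avoids the generators $\alpha_i,\beta_i$ entirely by invoking Proposition \ref{hom_phi}~b): $\ker\phi$ is \emph{normally generated} by the commutators $[\sigma_1,x]$ and by $\sigma_1^{g+n-1}F^{2-2g}$. Since $\mathcal{C}^\fr_n(f)_\sharp$ fixes $\sigma_1$ and $F$ (for $\sigma_1$ it suffices that its image is a conjugate of $\sigma_1$, which maps to the central element $u$; for $F$, that $f$ preserves the $S^1$-fibre over $*$ with its orientation), it preserves this set of normal generators up to conjugacy and hence preserves their normal closure. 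This makes the question of what happens to $\alpha_i,\beta_i$ — and the crossed homomorphism $\delta_f$ — irrelevant to the lifting statement, which is the main efficiency you are missing.
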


\begin{proof}
The diffeomorphism $\mathcal{C}^\fr_n(f)$ fixes the base point $*^\fr\in \mathcal{C}^\fr_n(\Sigma_g)$. It induces an automorphism $\mathcal{C}^\fr_n(f)_\sharp$ of  $\mathbb{B}^\fr_n(\Sigma_g)=\pi_1(\mathcal{C}^\fr_n(\Sigma_g), *^\fr)$
 which fixes the classical generator $\sigma_1$ and the framing generator $F$. Recall that $\widetilde{\mathcal{C}}^\fr_n(\Sigma_g)$ is the regular covering space associated with $\phi: \mathbb{B}^\fr_n(\Sigma_g)\rightarrow \Heis_g$.
 From Proposition \ref{hom_phi} we get that $$\mathcal{C}^\fr_n(f)_\sharp(\mathrm{Ker}(\phi))=\mathrm{Ker}(\phi)\ ,$$
which proves the statement.
\end{proof}
The above argument also proves the following, which can be seen as an extension of similar results on surface braid groups
\cite{An,BGG2017,HeisenbergHomology}.
\begin{proposition}
 There exists a unique automorphism $f_{\Heis} \colon \Heis_g \rightarrow \Heis_g$, which is identity on the center and such that the following square commutes:
\begin{equation}
\label{eq:projection-equivariance}
  \begin{tikzcd}
     \mathbb{B}^\fr_{n}(\Sigma_g) \arrow[d,swap, "\phi"]\arrow[r, "{\mathbb{B}^\fr_{n}(\Sigma)}"] & \mathbb{B}^\fr_{n}(\Sigma_g) \arrow[d,"\phi"]\\
     \Heis_g \arrow[r, "f_{\Heis}"]& \Heis_g
  \end{tikzcd}
\end{equation}
\end{proposition}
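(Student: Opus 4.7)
The plan is to read off the proposition from the argument already given in the proof of Proposition \ref{f_lift}, together with a small gcd calculation to pin down the action on the center.

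First, for existence and uniqueness of $f_\Heis$: the proof of Proposition \ref{f_lift} shows that $\mathcal{C}^\fr_n(f)_\sharp$ preserves $\mathrm{Ker}(\phi)$. Since $\phi$ is surjective by Proposition \ref{hom_phi}(a), the composite $\phi\circ\mathcal{C}^\fr_n(f)_\sharp$ factors uniquely through $\phi$, producing a unique homomorphism $f_\Heis:\Heis_g\to\Heis_g$ that makes the square \eqref{eq:projection-equivariance} commute. Surjectivity of $\phi$ also yields the uniqueness statement in the diagram, and applying the same construction to $f^{-1}$ delivers a two-sided inverse, so $f_\Heis$ is indeed an automorphism.

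Next, for the identity-on-center property: as noted in the proof of Proposition \ref{f_lift}, $\mathcal{C}^\fr_n(f)_\sharp$ fixes the classical generator $\sigma_1$ and the framing generator $F$. Chasing the commutative square then gives
\[
f_\Heis(u) = f_\Heis(\phi(\sigma_1)) = \phi(\sigma_1) = u, \quad f_\Heis(v) = f_\Heis(\phi(F)) = \phi(F) = v,
\]
where $u=(1,0)$ and $v=\bigl(\tfrac{g+n-1}{2g-2},0\bigr)$. It remains to check that $u$ and $v$ generate the full center $\Z\nu\times\{0\}$: by B\'ezout, the subgroup of $\Q$ generated by $1$ and $\tfrac{g+n-1}{2g-2}$ is exactly $\tfrac{\gcd(2g-2,\,g+n-1)}{2g-2}\Z = \nu\Z$. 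Hence $f_\Heis$ restricts to the identity on the entire center.

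The main (and essentially only) obstacle is the claim that $\mathcal{C}^\fr_n(f)_\sharp(\sigma_1)=\sigma_1$ and $\mathcal{C}^\fr_n(f)_\sharp(F)=F$ inside $\mathbb{B}^\fr_n(\Sigma_g)$. This is already handled in the proof of Proposition \ref{f_lift}: $F$ is a small loop along the $S^1$-fiber over $*^\fr$, preserved by any orientation-preserving lift of $f$ to the bundle, while $\sigma_1$ is a canonical local half-twist supported in a neighborhood of two punctures, preserved by a diffeomorphism fixing the weakly framed base configuration. Once those two fixings are in place, everything else is a formal consequence of Proposition \ref{hom_phi} together with the gcd computation above.
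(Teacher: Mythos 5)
Your proof is correct and takes essentially the same route as the paper, whose entire argument for this proposition is the pointer ``the above argument also proves the following'' back to the proof of Proposition \ref{f_lift}: you have merely made explicit the factorization of $\phi\circ\mathcal{C}^\fr_n(f)_\sharp$ through the surjection $\phi$ and the B\'ezout computation showing that $u=(1,0)$ and $v=\bigl(\tfrac{g+n-1}{2g-2},0\bigr)$ generate the center $\Z\nu\times\{0\}$. The one caveat (shared with the paper) is that $\mathcal{C}^\fr_n(f)_\sharp(\sigma_1)$ is in general only conjugate to $\sigma_1$ rather than literally equal to it, but since $\phi(\sigma_1)=u$ is central this affects neither the kernel preservation nor the conclusion $f_\Heis(u)=u$.
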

\begin{definition}
The $\fr$-based Mapping Class Group $\mathfrak{M}^\fr(\Sigma_g,*^\fr)$ is the group of isotopy classes of symplectomorphisms fixing the base weakly framed configuration $*^\fr$.
\end{definition}

\begin{proposition}
The group $\mathfrak{M}^\fr(\Sigma_g,*^\fr)$ is a $\Z$ central extension of $\mathfrak{M}(\Sigma_g,*)$, with kernel generated by the half twist around the base point $*_1$.
\end{proposition}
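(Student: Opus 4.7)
The plan is to realize the extension via the long exact sequence of homotopy groups associated with a continuous group homomorphism out of the symplectomorphism group of $(\Sigma_g,*)$. Consider the topological group $\mathrm{Symp}_*(\Sigma_g)$ of symplectomorphisms fixing the configuration $*$, together with the evaluation
\[
e\colon \mathrm{Symp}_*(\Sigma_g)\to S^1,\quad f\mapsto \det(df)^2\vert_*,
\]
describing the action of $f$ on the square determinant fiber over $*$, which is precisely the $S^1$-fiber of $\mathcal{C}^\fr_n(\Sigma_g)\to \mathcal{C}_n(\Sigma_g)$ over $*$. This is a continuous surjective homomorphism (surjectivity realized by local rotations near $*_1$) with kernel $\mathrm{Symp}_{*^\fr}(\Sigma_g)$. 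Taking $\pi_0$ of the inclusion $\mathrm{Symp}_{*^\fr}\hookrightarrow \mathrm{Symp}_*$ recovers the forgetful map $p\colon \mathfrak{M}^\fr\to \mathfrak{M}$, and the long exact sequence of the fibration $\mathrm{Symp}_{*^\fr}\hookrightarrow\mathrm{Symp}_*\twoheadrightarrow S^1$ reads
\[
\pi_1(\mathrm{Symp}_*)\to \pi_1(S^1)\to \mathfrak{M}^\fr\to \mathfrak{M}\to 1.
\]

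Surjectivity of $p$ is then immediate. To conclude that $\ker p\cong\Z$, I would show that $\pi_1(\mathrm{Symp}_*(\Sigma_g))=0$. Combining Moser's theorem ($\mathrm{Symp}\simeq\mathrm{Diff}$ for surfaces) with the Earle--Eells contractibility of $\mathrm{Diff}_0(\Sigma_g)$ for $g\geq 2$, and using the evaluation fibration $\mathrm{Symp}_*\hookrightarrow\mathrm{Symp}\to\mathcal{C}_n(\Sigma_g)$ together with the asphericity of $\mathcal{C}_n(\Sigma_g)$ (so $\pi_2(\mathcal{C}_n(\Sigma_g))=0$), one gets $\pi_1(\mathrm{Symp}_*)=0$. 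This is the main technical input of the proof.

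Identifying the generator: the connecting map $\partial\colon \pi_1(S^1)\to \mathfrak{M}^\fr$ sends the positive generator of $\pi_1(S^1)$ to the class $[h]$ of the half twist around $*_1$. Indeed, the loop $t\mapsto e^{2\pi it}$ is lifted by the path $h_t$ consisting of local rotations of angle $\pi t$ supported in a small disc around $*_1$: one has $e(h_t)=e^{2\pi it}$, $h_0=\mathrm{id}$, and the endpoint $h_1=h$ lies in $\mathrm{Symp}_{*^\fr}$ since its square-determinant action at $*_1$ is $e^{2\pi i}=1$.

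Finally, the extension is central, by commutativity of $S^1$. For any $f\in\mathrm{Symp}_{*^\fr}$, the conjugated path $f h_t f^{-1}$ lies in $\mathrm{Symp}_*$ and satisfies
\[
e(f h_t f^{-1})=e(f)\,e(h_t)\,e(f)^{-1}=e(h_t),
\]
so it lifts the same generator of $\pi_1(S^1)$. Hence $[fhf^{-1}]=[h]$ in $\mathfrak{M}^\fr$, which shows that the conjugation action on $\ker(p)$ is trivial, completing the identification with a central $\Z$-extension generated by the half twist around $*_1$.
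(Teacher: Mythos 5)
Your argument is correct and follows the same skeleton as the paper's proof: both exhibit $\mathfrak{M}^\fr(\Sigma_g,*^\fr)$ inside the exact sequence of the evaluation fibration $\mathrm{Symp}_{*^\fr}\hookrightarrow \mathrm{Symp}_*\twoheadrightarrow S^1$, and both identify the image of the generator of $\pi_1(S^1)$ with the half twist around $*_1$ by lifting the loop to the path of local rotations whose square determinant makes one full turn. You differ in two substantive details, both to your credit. First, the paper writes the sequence as beginning with $0\to\Z$ without comment; you supply the justification that $\pi_1(\mathrm{Symp}_*(\Sigma_g))$ vanishes (Moser plus Earle--Eells for $g\geq 2$, fed through the evaluation fibration over the aspherical space $\mathcal{C}_n(\Sigma_g)$), which is exactly what is needed for the kernel of $\mathfrak{M}^\fr\to\mathfrak{M}$ to be all of $\Z$ rather than a proper quotient. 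Second, your centrality argument is genuinely different: the paper reduces an arbitrary $\fr$-based class, modulo classical braids, to one supported away from a disc around $*$ and checks commutation with the half twist case by case, whereas you observe that for $f\in\mathrm{Symp}_{*^\fr}$ the conjugate path $fh_tf^{-1}$ lifts the same loop in the abelian group $S^1$ as $h_t$ does, so well-definedness of the connecting map already forces $[fhf^{-1}]=[h]$. This is cleaner and sidesteps the paper's somewhat delicate claim that the half twist commutes, up to isotopy fixing $*^\fr$, with classical braids. The one point you should make explicit is why $e$ is a group homomorphism, i.e.\ why the induced action of a symplectomorphism on the $S^1$-fibre of $\mathcal{C}^\fr_n(\Sigma_g)$ over $*$ is a rotation; this comes from the paper's construction of $\mathcal{C}^\fr_n(f)$ through the Lagrangian Grassmannian bundle and is the only input your centrality step relies on.
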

\begin{proof}
We have an evaluation map from the group of symplectomorphisms fixing the base configuration $*$ to the fiber $S^1$ over $*$ in the bundle $\mathcal{C}^\fr_n(\Sigma_g)$. This is a fibration and we obtained an exact sequence

\begin{align*}
0\rightarrow \Z=\pi_1(S^1)&\rightarrow \mathfrak{M}^\fr(\Sigma_g,*^\fr)=\pi_0(\mathrm{Symp}^\fr(\Sigma_g,*^\fr))\\ & \rightarrow\mathfrak{M}(\Sigma_g,*)=\pi_0(\mathrm{Symp}(\Sigma_g,*))\rightarrow 1
\end{align*}

The isotopy between the identity and the half twist around $*_1$ rotates the framing at $*_1$ by $\pi$, which generates $\pi_1$ of the fiber. This identifies the kernel generator. This half twist commutes with symplectomorphisms
which are identity on a disc neighbourhood of the base configuration $*$. One can check that it also commutes up to isotopy fixing $*^\fr$ with symplectomorphisms supported in a disc containing $*$, which are classical braids. Composing with classical braids any symplectomorphism fixing $*^\fr$ is isotopic to one which is the identity on a disc neighbourhood of $*$. Centrality follows.
\end{proof}

We denote by $\mathrm{Aut}^+(\Heis_g)$ the group of {\em oriented} automorphisms of $\Heis_g$ which means automorphisms which are identity on the center.
We have an action of the $\fr$-based Mapping Class Group on the Heisenberg group $\Heis_g$,
$\Psi \colon  \mathfrak{M}^\fr(\Sigma_g,*^\fr) \longrightarrow \mathrm{Aut}^+(\Heis_g)$, $f\mapsto f_\Heis$.
The quotient of $\Heis_g$ by its center is equal to $H_1(\Sigma_g,\Z)$ hence every  oriented automorphism  $\tau\in \mathrm{Aut}^+(\Heis_g)$ induces an automorhism of $H_1(\Sigma_g,\Z)$. The triviality of the action on the center implies that the induced map
$\overline{\tau}$  is symplectic, so we have an homomorphism $\mathrm{Aut}^+(\Heis_g) \to Sp(H_1(\Sigma_g,\Z))$.  This homomorphism has a section and its kernel is isomorphic to $\mathrm{Hom}(H_1(\Sigma_g,\Z),\Z\nu)\cong H^1(\Sigma_g,\Z\nu)$, see \cite[Lemma 16]{HeisenbergHomology}. This identifies the group $\mathrm{Aut}^+(\Heis_g)$ as a semidirect product
$$\mathrm{Aut}^+(\Heis_g) \cong Sp(H_1(\Sigma_g)) \ltimes H^1(\Sigma_g,\Z\nu)\ .$$
The action of a $\fr$-based mapping class $f\in \mathfrak{M}^\fr(\Sigma_g,*^\fr)$ on $\Heis_g$ writes down
$$f_\Heis: (k,x)\mapsto (k+\delta_f(x),f_*(x))\ ,$$
where $\delta: \mathfrak{M}^\fr(\Sigma_g,*^\fr) \rightarrow H^1(\Sigma_g,{\Z}\nu)$ is a crossed homomorphism, i.e.
for all $f,g\in \mathfrak{M}(\Sigma_g)$ we have
$\delta_{g\circ f}=\delta_f+f^*(\delta_g)$, see \cite[Section 3.3]{HeisenbergHomology}.

From proposition \ref{f_lift} we obtain for $f\in \mathfrak{M}^\fr(\Sigma_g,*^\fr)$ an homology isomorphism
\begin{equation*}
\label{iso}
\widetilde{\mathcal{C}}^\fr_n(f)_* \colon H_*( \mathcal{C}^\fr_n(\Sigma_g),\Z) \longrightarrow H_*\bigl( \mathcal{C}^\fr_n(\Sigma_g) ,\Z\bigr)
\end{equation*}
which is $\Z$-linear. This provides a representation of the $\fr$-based Mapping Class Group
$$\mathfrak{M}^\fr(\Sigma_g,*^\fr)\rightarrow \mathrm{Aut}_\Z(  H_*( \mathcal{C}^\fr_n(\Sigma_g),\Z))\ .$$
This representation is twisted with respect to the right $\Z[\Heis_g]$-module structure, which means
that for $x\in H_*( \mathcal{C}^\fr_n(\Sigma_g),\Z)$, $h\in \Heis_g$, we have
$$\widetilde{\mathcal{C}}^\fr_n(f)_*(x.h)=\widetilde{\mathcal{C}}^\fr_n(f)_*(x).f_\Heis(h)\ .$$

For a representation $\rho: \Heis_g\rightarrow GL(V)$  and automorphism $\tau\in \mathrm{Aut}^+(\Heis_g)$, we denote by ${}_{\tau}\!V$ the twisted representation $\rho\circ \tau$.
Recall that the homology with coefficient in $V$ is computed from the complex $S_*(\mathcal{C}^\fr_n(\Sigma),V):=S_*(\widetilde{\mathcal{C}}^\fr_n(\Sigma_g))\otimes_{\Z[\Heis_g]} V$.
\begin{theorem}
There is a natural twisted representation of the $\fr$-based Mapping Class Group $\mathfrak{M}^\fr(\Sigma_g,*^\fr)$ on
\begin{equation*}
H_*\bigl( \mathcal{C}^\fr_n(\Sigma_g),{}_\tau\!V \bigr) \ , \quad \tau \in \mathrm{Aut}^+(\Heis_g)\ ,\
\end{equation*}
where the action of $f\in \mathfrak{M}^\fr(\Sigma,*^\fr)$ is
\begin{equation*}
\label{twisted}
\mathcal{C}^\fr_n(f)_* \colon H_*\bigl( \mathcal{C}^\fr_n(\Sigma_g) , {}_{\tau\circ f_\Heis}\!V \bigr) \longrightarrow H_*\bigl( \mathcal{C}^\fr_n(\Sigma_g) ,{}_{\tau}\!V \bigr)
\end{equation*}
\end{theorem}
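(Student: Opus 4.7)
The plan is to build the action chain-level-first: lift each $f \in \mathfrak{M}^\fr(\Sigma_g,*^\fr)$ to a twisted-equivariant diffeomorphism of the Heisenberg cover, and then tensor with the appropriately re-twisted coefficient module to descend to the complex computing the stated homology. Applying Proposition~\ref{f_lift} I obtain a unique lift $\widetilde{\mathcal{C}}^\fr_n(f)$ of $\mathcal{C}^\fr_n(f)$ fixing $\tilde *^\fr$. The first thing to check is that this lift is $f_\Heis$-twisted equivariant with respect to the deck action,
\[
\widetilde{\mathcal{C}}^\fr_n(f)(x\cdot h)=\widetilde{\mathcal{C}}^\fr_n(f)(x)\cdot f_\Heis(h), \qquad h\in\Heis_g,
\]
a property already announced by the paper at the level of $\Z$-coefficient homology. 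For this, both sides cover the same map $\mathcal{C}^\fr_n(f)$ downstairs, and the commutative square~\eqref{eq:projection-equivariance} forces their values on $\tilde *^\fr$ to coincide; uniqueness of lifts propagates the identity everywhere.

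Second, for each $\tau\in \mathrm{Aut}^+(\Heis_g)$ I would define
\[
\Phi^\tau_f(\sigma\otimes v)=\widetilde{\mathcal{C}}^\fr_n(f)_*(\sigma)\otimes v
\]
as a $\Z$-linear map between $S_*(\widetilde{\mathcal{C}}^\fr_n(\Sigma_g))\otimes_{\Z[\Heis_g]} {}_{\tau\circ f_\Heis}\!V$ and $S_*(\widetilde{\mathcal{C}}^\fr_n(\Sigma_g))\otimes_{\Z[\Heis_g]} {}_\tau\!V$. The shift of coefficients has been engineered precisely so that $\Phi^\tau_f$ respects the tensor relations: the source identification $\sigma\cdot h\otimes v = \sigma\otimes \rho(\tau(f_\Heis(h)))v$ maps, via the equivariance above, to the target identification $\widetilde{\mathcal{C}}^\fr_n(f)_*(\sigma)\cdot f_\Heis(h)\otimes v = \widetilde{\mathcal{C}}^\fr_n(f)_*(\sigma)\otimes \rho(\tau(f_\Heis(h)))v$. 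Commutation with differentials is automatic since $\Phi^\tau_f$ is induced by a continuous map; passing to homology produces $\mathcal{C}^\fr_n(f)_*$ as claimed.

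For the twisted representation law, the uniqueness clauses in Proposition~\ref{f_lift} and in the square~\eqref{eq:projection-equivariance} yield the two identities $\widetilde{\mathcal{C}}^\fr_n(gf)=\widetilde{\mathcal{C}}^\fr_n(g)\circ\widetilde{\mathcal{C}}^\fr_n(f)$ and $(gf)_\Heis=g_\Heis\circ f_\Heis$. These combine formally to give $\Phi^\tau_{gf}=\Phi^\tau_g\circ\Phi^{\tau\circ g_\Heis}_f$ after identifying ${}_{(\tau\circ g_\Heis)\circ f_\Heis}\!V$ with ${}_{\tau\circ(gf)_\Heis}\!V$; naturality in $\tau$ is immediate from the formula defining $\Phi^\tau_f$, and naturality in $V$ from the tensor product construction.

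The main obstacle is purely clerical: keeping track of which twist sits on which slot of each complex, and verifying that the single formula $\sigma\otimes v\mapsto \widetilde{\mathcal{C}}^\fr_n(f)_*(\sigma)\otimes v$ consistently transports the coefficient module from ${}_{\tau\circ f_\Heis}\!V$ on the source to ${}_\tau\!V$ on the target. Once the twisted equivariance of the Heisenberg lift is in place, every subsequent verification reduces to a formal manipulation of tensor products, so the depth of the statement is contained in Proposition~\ref{f_lift} and the commutativity of~\eqref{eq:projection-equivariance} rather than in any new geometric input.
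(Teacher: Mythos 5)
Your proposal is correct and follows essentially the same route as the paper: lift $f$ via Proposition~\ref{f_lift}, establish the $f_\Heis$-twisted equivariance of the lift on chains, and observe that $z\otimes v\mapsto \widetilde{\mathcal{C}}^\fr_n(f)_*(z)\otimes v$ is well defined from $S_*(\widetilde{\mathcal{C}}^\fr_n(\Sigma_g))\otimes_{\Z[\Heis_g]}{}_{\tau\circ f_\Heis}\!V$ to $S_*(\widetilde{\mathcal{C}}^\fr_n(\Sigma_g))\otimes_{\Z[\Heis_g]}{}_{\tau}\!V$ precisely because the twist has been shifted by $f_\Heis$ on the source. You in fact supply more detail than the paper does, in particular the uniqueness-of-lifts argument for the twisted equivariance and the verification of the composition law.
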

\begin{proof}
The action of $\widetilde{\mathcal{C}}^\fr_n(f)$ on chains is twisted with respect to the $\Z[\Heis_g]$ action, which writes down
$$S_*(\widetilde{\mathcal{C}}^\fr_n(f)(zh)=S_*(\widetilde{\mathcal{C}}^\fr_n(f)(z)f_\Heis(h), \text{ for $z\in S_*(\widetilde{\mathcal{C}}^\fr_n(\Sigma_g))$, $h\in \Heis_g$.}$$
We check that the map $z\otimes v\mapsto S_*(\widetilde{\mathcal{C}}^\fr_n(f)(z)\otimes v$ induces an isomorphism
$$S_*(\mathcal{C}^\fr_n(\Sigma),{}_{\tau\circ f_\Heis}\!V)\rightarrow S_*(\mathcal{C}^\fr_n(\Sigma),{}_{\tau}\!V)\ ,$$
which produces the functorial twisted action on the homologies.
\end{proof}
\section{MCG representations from the regular action on Heisenberg group}\label{regular}
In this section we obtain finite dimensional representations of the Mapping Class Groups from the left regular action on the Heisenberg group $\Heis_g^\Q$. The group $\Heis_g$ is a subgroup in $\Heis_g^\Q$.
We endow $\Heis_g^\Q=\Q\times H_1(\Sigma_g,\Q)$ with affine structure isomorphic to $\Q^{2g+1}$.
The left regular action $l_{(k_0,x_0)}$ is then an affine automorphism. We decompose $x_0=p_0+q_0$, $p_0\in \Lambda_a=\mathrm{Span}(a_i, 1\leq i\leq g)$, $q_0\in \Lambda_b=\mathrm{Span}(b_i, 1\leq i\leq g)$, then the action  is written
$$\begin{cases}
k'=k+k_0+p_0.q-q_0.p\\
p'=p+p_0\\
q'=q+q_0
\end{cases}$$
We  consider the linearisation $\rho_L$ of this affine action on  $L=\Heis_g^\Q\oplus \Q$. The linear action of
$\rho_L(k_0,x_0)$
is as follows.
$$\begin{cases}
k'=k+t k_0+p_0.q-q_0.p\\
p'=p+t p_0\\
q'=q+t q_0\\
t'=t
\end{cases}$$
The nice feature of this representation is that the twisted representation ${}_{\tau}\!L$  is canonically isomorphic to $L$.
\begin{lemma}
For $\tau \in \mathrm{Aut}^+(\Heis_g)$,  the linear map $\tau\times \id_\Q: L\mapsto {}_{\tau}\!L$ gives an isomorphism of
$\Z[\Heis_g]$-module.
\end{lemma}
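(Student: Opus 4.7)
The plan is to exploit the semidirect decomposition $\mathrm{Aut}^+(\Heis_g)\cong Sp(H_1(\Sigma_g))\ltimes H^1(\Sigma_g,\Z\nu)$ recalled just above the lemma, which lets me write any $\tau\in\mathrm{Aut}^+(\Heis_g)$ in the normal form $\tau(k,x)=(k+\delta(x),\overline{\tau}(x))$, where $\delta$ is a linear form and $\overline{\tau}$ is symplectic on $H_1(\Sigma_g,\Z)$. Extending scalars to $\Q$, the same formula extends $\tau$ to an automorphism of $\Heis_g^\Q$. The map $\tau\times\id_\Q$ is then a $\Q$-linear bijection of $L$ with inverse $\tau^{-1}\times\id_\Q$, so all that remains is to verify that it intertwines the $\Z[\Heis_g]$-actions; by additivity on the group ring it suffices to check this on single group elements $h=(k_0,x_0)\in\Heis_g$.

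For such $h$ and $v=(k,x,t)\in L$, I would compute both sides of the equivariance identity $(\tau\times\id_\Q)(\rho_L(h)\cdot v)=\rho_L(\tau(h))\cdot((\tau\times\id_\Q)(v))$ directly from the explicit formula for $\rho_L$. On one hand, $\rho_L(h)\cdot v=(k+tk_0+x_0\cdot x,\,x+tx_0,\,t)$, so applying $\tau\times\id_\Q$ and using linearity of $\delta$ and $\overline{\tau}$ produces the triple $(k+tk_0+x_0\cdot x+\delta(x)+t\delta(x_0),\,\overline{\tau}(x)+t\overline{\tau}(x_0),\,t)$. On the other hand, $\tau(h)=(k_0+\delta(x_0),\overline{\tau}(x_0))$ and $(\tau\times\id_\Q)(v)=(k+\delta(x),\overline{\tau}(x),t)$, so $\rho_L(\tau(h))\cdot((\tau\times\id_\Q)(v))$ equals $(k+\delta(x)+t(k_0+\delta(x_0))+\overline{\tau}(x_0)\cdot\overline{\tau}(x),\,\overline{\tau}(x)+t\overline{\tau}(x_0),\,t)$. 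The second and third coordinates agree on the nose, and the first coordinates differ only by $x_0\cdot x-\overline{\tau}(x_0)\cdot\overline{\tau}(x)$.

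The key and in fact only nontrivial step is that this residual difference vanishes, which is exactly the statement that $\overline{\tau}$ is symplectic; this in turn is automatic for $\tau\in\mathrm{Aut}^+(\Heis_g)$ because the commutator in $\Heis_g$ realises the intersection pairing and is therefore preserved by any center-fixing automorphism. I do not foresee any serious obstacle: the verification amounts to a one-line cancellation in which the cocycle contributions $\delta(x)+t\delta(x_0)$ match on the two sides by linearity of $\delta$, while the symplectic invariance handles the quadratic term $x_0\cdot x$. Naturality of $\tau\mapsto\tau\times\id_\Q$ in $\tau$ follows immediately, so composition with further automorphisms is compatible with the isomorphism produced.
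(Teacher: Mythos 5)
Your proof is correct, but it takes a genuinely different route from the paper's. The paper never writes $\tau$ in coordinates: it observes that for the left regular action of a group on itself, twisting by an automorphism $\tau$ is the same as conjugating by $\tau$, i.e. $l_{\tau(h)}=\tau\circ l_h\circ\tau^{-1}$, which follows in one line from $\tau(h)\cdot u=\tau\bigl(h\,\tau^{-1}(u)\bigr)$; linearising this identity gives $(\rho_L\circ\tau)(h)=(\tau\times\id_\Q)\circ\rho_L(h)\circ(\tau\times\id_\Q)^{-1}$, which is exactly the asserted module-map property. You instead invoke the normal form $\tau(k,x)=(k+\delta(x),\overline{\tau}(x))$ coming from the semidirect-product description of $\mathrm{Aut}^+(\Heis_g)$ and verify the equivariance coordinate by coordinate, reducing everything to the symplecticity of $\overline{\tau}$ (itself correctly deduced from $\tau$ fixing the center via the commutator formula $[(0,x),(0,y)]=(2\,x.y,0)$) together with the additivity of $\delta$. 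Both arguments are complete. The paper's is shorter and works verbatim for the regular representation of any group twisted by any automorphism, with no structure theory of $\mathrm{Aut}^+(\Heis_g)$ needed; yours has the advantage of locating precisely where the hypothesis $\tau\in\mathrm{Aut}^+$ enters and of exhibiting the intertwiner explicitly in coordinates. One small point to be aware of: the cocycle term $p_0.q-q_0.p$ in the paper's formula for $\rho_L$ is just the intersection pairing $x_0.x$ expanded in the Lagrangian splitting $x=p+q$, so your identification of the residual discrepancy with $x_0\cdot x-\overline{\tau}(x_0)\cdot\overline{\tau}(x)$ is the right reading of that formula.
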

\begin{proof}
We first check that $\tau$ intertwines the affine action $l_{(k_0,x_0)}$ on $\Heis_g^\Q$ and the twisted affine action
$l_{\tau(k_0,x_0)}$.
We have
$$l_{\tau(k_0,x_0)})(k,x)=\tau(k_0,x_0)(k,x)=\\
\tau\left((k_0,x_0)\tau^{-1}(k,x)\right)=\tau\left(l_{(k_0,x_0)}(\tau^{-1}(k,x)\right)\ .$$
The result is written
$$ l_{\tau(k_0,x_0)}=\tau\circ l_{(k_0,x_0)}\circ \tau^{-1}\ .$$
After linearisation we obtain the intertwinning formula
$$ (\rho_L\circ\tau)(k_0,x_0))=(\tau\times \id_\Q)\circ \rho_L(k_0,x_0)\circ (\tau^{-1}\times \id_\Q)\ .$$
\end{proof}
Composing the homology isomorphism induced by the intertwinning of representations with the twisted action from Theorem \ref{twisted}, we obtain a natural homological action  of $\fr$-based mapping classes by automorphisms.
\begin{theorem}There is  a representation
$$\mathfrak{M}^\fr(\Sigma_g)\rightarrow \mathrm{Aut}(H_*\bigl( \mathcal{C}^\fr_n(\Sigma_g) ,L\bigr)\ ,$$
which associates to $f\in  \mathfrak{M}(\Sigma_g)$
the composition of the coefficient isomorphism  induced by $f_\Heis$, $$H_*\bigl( \mathcal{C}^\fr_n(\Sigma_g),L \bigr)\cong
H_*\bigl( \mathcal{C}^\fr_n(\Sigma_g) ,{}_{f_\Heis}\!L \bigr)\ ,$$
 with the functorial homology isomorphism
$$\mathcal{C}^\fr_n(f)_* \colon H_*\bigl( \mathcal{C}^\fr_n(\Sigma_g) , {}_{f_\Heis}\!L\bigr) \longrightarrow H_*\bigl( \mathcal{C}^\fr_n(\Sigma_g) ,L \bigr)\ .$$
\end{theorem}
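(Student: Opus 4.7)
The plan is to define the representation by the formula $\Psi(f) := \mathcal{C}^\fr_n(f)_* \circ (\Phi_{f_\Heis})_*$, where $(\Phi_\tau)_* \colon H_*(\mathcal{C}^\fr_n(\Sigma_g), L) \to H_*(\mathcal{C}^\fr_n(\Sigma_g), {}_\tau L)$ is induced by the $\Z[\Heis_g]$-linear isomorphism $\Phi_\tau = \tau \times \id_\Q$ of the preceding Lemma, and $\mathcal{C}^\fr_n(f)_* \colon H_*(\mathcal{C}^\fr_n(\Sigma_g), {}_{f_\Heis}L) \to H_*(\mathcal{C}^\fr_n(\Sigma_g), L)$ is the functorial map from the previous theorem specialised to coefficient twist $\tau = \mathrm{Id}$. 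Both factors are isomorphisms of abelian groups, so each $\Psi(f)$ is an automorphism of $H_*(\mathcal{C}^\fr_n(\Sigma_g), L)$.

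The main step, and the principal obstacle, is the multiplicativity $\Psi(g \circ f) = \Psi(g) \circ \Psi(f)$. I would reduce it to the commutativity of the naturality square
\begin{equation*}
\begin{tikzcd}[column sep=large]
H_*(\mathcal{C}^\fr_n(\Sigma_g), {}_{f_\Heis}L) \arrow[r, "\mathcal{C}^\fr_n(f)_*"] \arrow[d, swap, "(\Phi_{g_\Heis})_*"] & H_*(\mathcal{C}^\fr_n(\Sigma_g), L) \arrow[d, "(\Phi_{g_\Heis})_*"] \\
H_*(\mathcal{C}^\fr_n(\Sigma_g), {}_{g_\Heis \circ f_\Heis}L) \arrow[r, swap, "\mathcal{C}^\fr_n(f)_*"] & H_*(\mathcal{C}^\fr_n(\Sigma_g), {}_{g_\Heis}L)
\end{tikzcd}
\end{equation*}
whose horizontal arrows are the $\tau = \mathrm{Id}$ and $\tau = g_\Heis$ instances of the twisted action. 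This square commutes at the chain level because the map $\widetilde{\mathcal{C}}^\fr_n(f)_\#$ is $f_\Heis$-equivariant (Proposition \ref{f_lift}) while $\Phi_{g_\Heis} = g_\Heis \times \id_\Q$ remains $\Z[\Heis_g]$-linear when viewed as a map ${}_{f_\Heis}L \to {}_{g_\Heis \circ f_\Heis}L$, by the very same intertwining formula as in the preceding Lemma; hence the two ways of tensoring over $\Z[\Heis_g]$ with $S_*(\widetilde{\mathcal{C}}^\fr_n(\Sigma_g))$ agree.

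Granting the square, I assemble
\[
\Psi(g) \circ \Psi(f) = \mathcal{C}^\fr_n(g)_* \circ (\Phi_{g_\Heis})_* \circ \mathcal{C}^\fr_n(f)_* \circ (\Phi_{f_\Heis})_*,
\]
commute the middle two factors using the square, invoke the functoriality $\mathcal{C}^\fr_n(g)_* \circ \mathcal{C}^\fr_n(f)_* = \mathcal{C}^\fr_n(g \circ f)_*$ of the twisted action established chainwise in the preceding theorem, and finish with the cocycle identity $(\Phi_{g_\Heis})_* \circ (\Phi_{f_\Heis})_* = (\Phi_{(g \circ f)_\Heis})_*$, immediate from $(g_\Heis \circ f_\Heis) \times \id_\Q = (g_\Heis \times \id_\Q) \circ (f_\Heis \times \id_\Q)$ together with $(g \circ f)_\Heis = g_\Heis \circ f_\Heis$. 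The result is exactly $\Psi(g \circ f)$, completing the argument. Once the naturality of $\Phi$ with respect to the twisted action is isolated, the remainder is bookkeeping of the various twisted coefficient modules.
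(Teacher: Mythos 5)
Your construction is exactly the paper's: the representation is defined as the composition of the coefficient isomorphism $(\Phi_{f_\Heis})_*$ coming from the intertwining Lemma with the functorial twisted map $\mathcal{C}^\fr_n(f)_*$ from the preceding theorem. The paper states this in one sentence and omits the verification of multiplicativity; your naturality square, the observation that $\Phi_{g_\Heis}$ is still $\Z[\Heis_g]$-linear as a map ${}_{f_\Heis}L \to {}_{g_\Heis \circ f_\Heis}L$, and the identity $(g\circ f)_\Heis = g_\Heis \circ f_\Heis$ (from the uniqueness clause of the proposition defining $f_\Heis$) correctly supply the details left implicit.
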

\section{About computation}
In \cite{HeisenbergHomology} it is proved that a relative Borel-Moore Heisenberg homology of configurations in $\Sigma_{g,1}$ is free of finite dimension over the group ring of the Heisenberg group. The argument does not work for closed surfaces. A more careful analysis of a cell decomposition of weakly framed configurations is likely to be needed.
We first quote that $\mathcal{C}^\fr_n(\Sigma_g)$ has the homotopy type of a finite CW-complex. Indeed,  we get the same homotopy type if we consider weakly framed configurations where points cannot be $\epsilon$-closed with $\epsilon$ small enough, i.e we replace the condition $x_i\neq x_j$ by $d(x_i,x_j)\geq \epsilon$, $i\neq j$, and get a compact manifold with boundary $\mathcal{C}^{\epsilon,\fr}_n(\Sigma_g)$. It follows that for finite dimensional representations of the Heisenberg group, the obtained homologies are finite dimensional.

It is exciting to analyse submanifolds representing cycles in Heisenberg homologies, expecting that certain family could generate a subspace invariant under Mapping Class Group action. Let us denote by  $\mathcal{C}^\fr_{*_1,n-1}(\Sigma_g)$ the subspace of weakly framed configurations containing the point $*_1$. For a partition $n=n_1+n_2+\dots+n_{2g-1}+n_{2g}$, we obtain a cell formed with configurations having $n_1$, $n_2$, \dots,$n_{2g-1}$,  $n_{2g}$ points respectively on $\alpha_1$, $\beta_1$, \dots, $\alpha_g$, $\beta_g$,
weakly framed by the vector field $X$. This gives a properly embedded cell representing an homology class in $H^{BM}_*\bigl( \mathcal{C}^\fr_n(\Sigma_g) ,\mathcal{C}^\fr_{*_1,n-1}(\Sigma_g),V \bigr)$, where $\mathcal{C}^\fr_{*_1,n-1}(\Sigma_g)$
denotes the subspace of $n$-points configurations containing $*_1$. We will obtain classes in $H_n\bigl( \mathcal{C}^\fr_n(\Sigma_g) ,V)$ by studying the kernel of the boundary map $$H^{BM}_n\bigl( \mathcal{C}^\fr_n(\Sigma_g) ,\mathcal{C}^\fr_{*_1,n-1}(\Sigma_g);V \bigr)\rightarrow
H^{BM}_{n-1}\bigl( \mathcal{C}^\fr_{*_1,n-1}(\Sigma_g),V \bigr)\ .$$ The case $n=2$ already looks promising.

\bibliographystyle{plain}
\bibliography{biblio}

\end{document}